\DeclareRobustCommand{\stirling}{\genfrac[]{0pt}{}}
\DeclareRobustCommand{\sterling}{\genfrac\{\}{0pt}{}}
\newtheorem{theorem}{Theorem}
\newtheorem{lemma}{Lemma}
\title{More Absent-Minded Passengers}
\author{Jacob A. Boswell\footnote{Mathematics Department, Missouri
    Southern State University, e-mail: \texttt{boswell-j@mssu.edu}},
  Jacob N. Clark\footnote{Mathematics Department, Missouri Southern
    State University, e-mail: \texttt{clark-jacob@mssu.edu}}, and Chip
  Curtis\footnote{Mathematics Department, Missouri Southern State
    University, e-mail: \texttt{curtis-c@mssu.edu}}}
\begin{document}

\maketitle

\let\thefootnote\relax
\footnotetext{MSC: Primary 60C05, Secondary 00A08}
\addtocounter{footnote}{-2}\let\thefootnote\svthefootnote

\begin{abstract}
We offer a formula for the probability distribution of the number of misseated
airplane passengers resulting from the presence of multiple
absent-minded passengers, given the number of seats available and the
number of absent-minded passengers. This extends the work of Henze and
Last on the absent-minded passenger problem.
\end{abstract}

\section{Introduction}
A recent article by Henze and
Last, \textit{Absent-Minded Passengers} \cite{Henze19}, considers the
problem of $k$ absent-minded passengers on an airplane with $n$
passengers assigned to $n$ seats. The absent-minded passengers are
assigned seats $\{1,2,...,k\}$, with the other passengers assigned
seats $\{k+1,...,n\}$. The passengers are seated in order of passenger
number. When it is time for one of the absent-minded passengers to
choose a seat, that passenger chooses an unoccupied seat at random,
with an equal likelihood for each of the unoccupied seats. When it is
time for a non-absent-minded passenger to choose a seat, that
passenger sits where assigned, if the assigned seat is available,
otherwise choosing an unoccupied seat at random. The authors of
\cite{Henze19} determine the probability distribution in the case
 where \(k\), the number of misseated passengers, is one, as well as the expected
value and variance for all \(k\geq 1\). In this paper, we find the
probability distribution for all positive integers $k$. 

We claim that, with $n$ passengers, the first $k$ of whom are
absent-minded, the probability that exactly $m$ of them will be
misseated is given by the following result.

\begin{theorem}[Main Result] \label{thm:mainresult} The probability of $m$ misseated passengers  is
\begin{equation*}
P_{n,k}(m)
=\frac{(-1)^m(n-k)!}{n!} {k \choose m}
+\frac{1}{n!}\sum_{s=1}^k \stirling{n-k+1}{m-s+1} {k \choose s} s! 
  \sum_{\ell=1}^s  \frac{(-1)^{s-\ell}\ell^{m-s}}{(s-\ell)!}.
\end{equation*}
\end{theorem}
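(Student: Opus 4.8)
The plan is to reduce the problem to a weighted count of permutations and then to extract the unsigned Stirling numbers of the first kind from an elementary symmetric function. First I would record the final seating as a permutation $\pi$, with passenger $i$ occupying seat $\pi(i)$, so that the number of misseated passengers is the number of non-fixed points of $\pi$. The crucial observation is that when passenger $i$ is about to sit, exactly $i-1$ seats are taken, hence there are always $n-i+1$ free seats. Each absent-minded passenger ($i\le k$) makes a uniform choice, contributing a factor $1/(n-i+1)$; a normal passenger ($i>k$) makes a choice exactly when seat $i$ is already occupied, which (by a short induction toward a fixed target $\pi$) happens precisely when $\pi(i)\ne i$, and this forces the reachability condition $\pi^{-1}(i)\le i$ for every $i>k$. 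Multiplying the factors gives, for every reachable $\pi$,
\[
\Pr(\pi)=\frac{(n-k)!}{n!}\prod_{\substack{i>k\\ \pi(i)\ne i}}\frac{1}{n-i+1}.
\]

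Next I would sum this over all reachable $\pi$ with exactly $m$ non-fixed points, writing $P_{n,k}(m)=\frac{(n-k)!}{n!}W_m$. The weight factors over the cycles of $\pi$, and the reachability condition becomes the local, per-cycle requirement that every \emph{heavy} element $i>k$ have a strictly smaller predecessor (calling $i\le k$ \emph{light} and $i>k$ heavy). I would classify a configuration by $s$, the number of displaced light passengers, and by the set $H$ of displaced heavy passengers, $|H|=m-s$. Only heavy non-fixed points carry weight, so the weight is $\prod_{i\in H}\frac{1}{n-i+1}$; and because every light value lies below every heavy value, the number of admissible cyclic arrangements of a fixed light set and $H$ depends only on the sizes $s$ and $m-s$, call it $A(s,m-s)$. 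Summing the weights over all $H$ produces an elementary symmetric polynomial in $1,\tfrac12,\dots,\tfrac1{n-k}$, and the identity $\prod_{r=1}^{n-k}(x+r)=\sum_{c}\stirling{n-k+1}{c}x^{c-1}$ turns this into $\stirling{n-k+1}{m-s+1}/(n-k)!$, yielding
\[
P_{n,k}(m)=\frac{1}{n!}\sum_{s=1}^{k}\binom{k}{s}\stirling{n-k+1}{m-s+1}\,A(s,m-s),
\]
where $\binom{k}{s}$ chooses the displaced absent-minded passengers.

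It then remains to evaluate $A(s,h)$, the number of permutations of $s$ light and $h$ heavy labeled elements in which no light element is fixed and every heavy element has a strictly smaller predecessor (a heavy fixed point is automatically excluded). I would drop the no-light-fixed-point condition and first count $B(s',h)$, the permutations in which every heavy element has a smaller predecessor. Assigning predecessors to the heavy elements in increasing order shows that each of the $h$ heavies has exactly $s'$ admissible choices while the lights are then permuted freely, giving the clean value $B(s',h)=s'!\,(s')^{h}$. Inclusion–exclusion over which lights are fixed gives
\[
A(s,h)=\sum_{\ell=0}^{s}(-1)^{s-\ell}\binom{s}{\ell}\,\ell!\,\ell^{h}
=s!\sum_{\ell=1}^{s}\frac{(-1)^{s-\ell}\ell^{h}}{(s-\ell)!}+(-1)^{s}[\,h=0\,],
\]
the isolated $\ell=0$ term surviving only when $h=0$.

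Finally I would substitute $A(s,m-s)$ into the sum: the main summand reproduces the double sum in the statement, while the correction $(-1)^{s}[\,h=0\,]$ contributes only at $s=m$, where $\stirling{n-k+1}{1}=(n-k)!$ converts it into $\frac{(-1)^m(n-k)!}{n!}\binom{k}{m}$, the first term. I expect the principal obstacle to lie in the second step: justifying rigorously that the count of admissible cyclic arrangements is independent of \emph{which} heavy seats are displaced (so the weights collapse into a symmetric function), together with the evaluation of $B(s',h)$; once these are secured, the Stirling identity and the inclusion–exclusion are routine. A useful consistency check is the case $k=1$, where the construction collapses to $P_{n,1}(m)=\stirling{n}{m}/n!$ for $m\ge 2$.
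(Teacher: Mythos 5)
Your proposal is correct, and its combinatorial core is genuinely different from the paper's. The two arguments share the same analytic skeleton: like the paper, you compute the probability of a specific outcome as $\frac{(n-k)!}{n!}\prod_j \frac{1}{n-(i_j-1)}$, condition on the number $s$ of misseated absent-minded passengers, and convert the sum of these weights over all displaced main-cabin sets into $\stirling{n-k+1}{m-s+1}/(n-k)!$ (your elementary-symmetric-function step is precisely the paper's Lemma~1). Where you diverge is in counting configurations. The paper decomposes a configuration into \emph{threads} and counts with Lah numbers, Stirling numbers of the second kind, and derangements, obtaining an unsimplified triple sum (its Theorem~\ref{thm:unsimplified}), and then needs a full section of algebra --- the derangement formula, the explicit formula for $\sterling{m-s}{t}$, trinomial revision, and an alternating-binomial-sum lemma --- to collapse that into the stated formula. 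You instead work globally with the final seating permutation: reachability is exactly $\pi^{-1}(i)\le i$ for all $i>k$, the displaced passengers form a fixed-point-free permutation in which every heavy element has a strictly smaller predecessor, and the relaxed count $B(s',h)=s'!\,(s')^h$ (assign predecessors to heavies in increasing order, $s'$ choices each, then biject the remaining elements onto the lights) plus a single inclusion--exclusion over fixed lights yields $A(s,h)$ already in final form, with the $(-1)^m\binom{k}{m}$ term emerging as the $h=0$ boundary case at $s=m$ via $\stirling{n-k+1}{1}=(n-k)!$. The obstacles you flagged are in fact easy: independence of $A(s,h)$ from the particular heavy set holds because the constraint refers only to relative order and all lights lie below all heavies, and your sequential evaluation of $B(s',h)$ is valid because predecessors already assigned to smaller heavies are automatically among the legal (smaller) choices for the current heavy, leaving exactly $s'$ fresh options each time. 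One nit: your intermediate sum should run from $s=0$ rather than $s=1$, since for $m=0$ the entire probability $(n-k)!/n!$ comes from the $s=0$ correction term; the final statement is unaffected because the inner sum over $\ell$ is empty at $s=0$. On balance, your route buys a shorter proof that lands directly on the closed form and explains its shape (the inner sum over $\ell$ is visibly an inclusion--exclusion over non-fixed absent-minded passengers, not an algebraic accident), while the paper's route buys the intermediate thread-structure theorem, which records finer combinatorial information that your argument never makes explicit.
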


Here, $\stirling{i}{j}$ is the unsigned Stirling number of the first kind, which is the number of permutations of $i$ elements with $j$ disjoint cycles, with the convention that $\stirling{p}{0}~=0$ and $\stirling{p}{-q}~=~0$ for positive $p$ and $q$ \cite[page 259]{Concrete}. The formula includes the assertion that the probability of exactly one misseated passenger is 0. 

For \(k=1, 2,\) and \(3\) missseated passengers this gives, respectively,
\begin{align*}
P_{n,1}(m) &= \frac{1}{n!}\stirling{n}{m}, \text{ for }m\geq 2 \\
P_{n,2}(m) &= \frac{(-1)^{m}}{n(n-1)}{2\choose m}+\frac{1}{n!}\left (2\stirling{n-1}{m}+\left (2^{m-1}-2\right )\stirling{n-1}{m-1} \right ) 
 \\
P_{n,3}(m) &= \frac{(-1)^{m}}{n(n-1)(n-2)}{3\choose m}+\frac{1}{n!}\left (3\stirling{n-2}{m}\right.\\&\phantom{==}+3\left (2^{m-1}-2\right)\stirling{n-1}{m-1}+\left(2\cdot 3^{m-2}-3\cdot 2^{m-2} +3 \right )\stirling{n-2}{m-2} \Bigg).
\end{align*}

\section{How the passengers can be misseated}

In preparation for the proof of the theorem, we prove the following lemma.

\begin{lemma}
\[
\sum_{k<i_{1}<i_{2}<\cdots < i_{m-s}\leq n}\left (\prod_{j=1}^{m-s}\frac{1}{n-\left (i_{j}-1 \right )} \right )
=\frac{1}{(n-k)!}\stirling{n-k+1}{m-s+1}.
\]
\end{lemma}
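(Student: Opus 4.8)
The plan is to recognize the left-hand sum as an elementary symmetric function evaluated at reciprocals of integers, and then to read that function off as a coefficient of the generating polynomial that defines the unsigned Stirling numbers of the first kind. Write $N=n-k$ and $r=m-s$, so that the indices $i_1<i_2<\cdots<i_r$ each range over $\{k+1,\ldots,n\}$. First I would apply the substitution $a_j=n-i_j+1$, an order-reversing bijection carrying $\{k+1,\ldots,n\}$ onto $\{1,\ldots,N\}$: under it the summand $\prod_{j}\frac{1}{n-(i_j-1)}$ becomes $\prod_{j}\frac{1}{a_j}$, and the chain $i_1<\cdots<i_r$ becomes $a_1>\cdots>a_r$. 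Summing $\prod_j \frac{1}{a_j}$ over all strictly decreasing choices is the same as summing over all $r$-element subsets of $\{1,\ldots,N\}$, so the entire left-hand side is exactly the $r$-th elementary symmetric function $e_r$ of the numbers $1,\frac12,\ldots,\frac1N$.

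Next I would use the factored form of the rising factorial to extract $e_r$. Since
\[
\prod_{i=0}^{N}(x+i)=x\cdot N!\prod_{i=1}^{N}\left(1+\frac{x}{i}\right)=N!\sum_{r\geq 0}e_r\!\left(1,\tfrac12,\ldots,\tfrac1N\right)x^{r+1},
\]
while by the defining property of the Stirling numbers $\prod_{i=0}^{N}(x+i)=\sum_{j}\stirling{N+1}{j}x^{j}$, comparing the coefficient of $x^{r+1}$ on the two sides yields $N!\,e_r=\stirling{N+1}{r+1}$. Substituting back $N=n-k$ and $r=m-s$ gives precisely the asserted identity.

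The only real difficulty is bookkeeping rather than mathematics: I must be careful that the substitution reverses the inequalities correctly and that the degenerate cases behave. When $r=0$ both sides equal $1$ (note $\stirling{N+1}{1}=N!$), and when $r>N$ both sides vanish, consistently with the stated conventions $\stirling{p}{0}=0$ and $\stirling{p}{-q}=0$. If one prefers to avoid generating functions entirely, the same identity follows by induction on $N$: splitting $e_r$ according to whether the term $\frac1N$ is used gives $e_r(1,\ldots,\tfrac1N)=e_r(1,\ldots,\tfrac1{N-1})+\frac1N\,e_{r-1}(1,\ldots,\tfrac1{N-1})$, and feeding in the induction hypothesis reduces the claim to the Pascal-type recurrence $\stirling{N+1}{r+1}=N\stirling{N}{r+1}+\stirling{N}{r}$ for the Stirling numbers of the first kind.
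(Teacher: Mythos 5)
Your proof is correct and takes essentially the same route as the paper's: the same index substitution followed by coefficient extraction from the polynomial $x(x+1)\cdots(x+N)$ that generates the unsigned Stirling numbers of the first kind. The only difference is bookkeeping --- the paper clears denominators and reads the resulting sum as products over complementary subsets inside $g_{n-k+1}(x)$, whereas you keep the reciprocals and absorb them via the factorization $\prod_{i=1}^{N}(x+i)=N!\prod_{i=1}^{N}\left(1+\tfrac{x}{i}\right)$, with your closing induction sketch being an elementary alternative the paper does not pursue.
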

\begin{proof}
To prove this, set $\ell_{j}=n-\left(i_{j}-1\right)$. Then the original sum becomes
\[
\frac{1}{(n-k)!}   \sum_{1\leq \ell_1<\ell_2<\cdots <\ell_{m-s}\leq n-k} \frac{(n-k)!}{\ell_1\ell_2\cdots
             \ell_{m-s}}.
\]
For a fixed positive integer $N$, let $g_{N}(x)$ be the generating function of the Stirling numbers of the first kind \cite[page 263]{Concrete}; that is,
\[
g_{N}(x) =x(x+1)\cdots (x+N-1)=\sum_{i=0}^{N}\stirling{N}{i}x^{i}. 
\]
By equating coefficients  of \(x^i\) in this equation, we find that 
\[
\stirling{N}{i} = \sum_{0\leq a_1<a_2<\cdots <a_{N-i}< N} a_1 a_2 \cdots a_{N-i},
\]
and therefore
\begin{align*}
  \frac{1}{(n-k)!}   \sum_{1\leq \ell_1<\ell_2<\cdots <\ell_{m-s}\leq n-k} \frac{(n-k)!}{\ell_1\ell_2\cdots
             \ell_{m-s}} &= \frac{1}{(n-k)!} \stirling{n-k+1}{m-s+1}.
\end{align*}
\end{proof}

Before proving the main theorem, we first prove the formula below. We later simplify this result to give Theorem~\ref{thm:mainresult}. 

\begin{theorem} \label{thm:unsimplified}
\begin{align*} 
  {P}_{n,k}(m) & = \frac{1}{n!}\sum_{s=0}^{{{k}}}{k \choose
                      s}\sum_{t=0}^{s }(t!)^2  \sterling{m-s}{t} \stirling{n-k+1}{m-s+1}
                       \sum_{r=t}^{s} {s\choose r} L(r,t) 
                      d_{s-r}.
\end{align*}
\end{theorem}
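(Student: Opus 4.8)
The plan is to compute $P_{n,k}(m)$ by summing, over every final seating permutation $\pi$ the process can produce, the probability that it is produced, and then reorganizing that sum into the stated shape. First I would record the probability of a fixed outcome. Writing $\pi(i)$ for the seat taken by passenger $i$, I note that when passenger $i$ sits there are exactly $n-i+1$ empty seats, so each absent-minded passenger ($i\le k$) contributes a factor $\frac{1}{n-i+1}$, each displaced normal passenger ($i>k$ with $\pi(i)\neq i$) also contributes $\frac{1}{n-i+1}$, and each correctly seated normal passenger contributes $1$. Moreover $\pi$ is attainable only when every seat $i>k$ is taken by a passenger of index at most $i$ (otherwise its owner would have found it free), i.e. $\pi^{-1}(i)\le i$ for $i>k$. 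Hence, with $D$ the set of displaced normal passengers, the probability of an attainable $\pi$ is $\frac{(n-k)!}{n!}\prod_{i\in D}\frac{1}{n-i+1}$, and $0$ otherwise.

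Next I would reduce the theorem to counting. Let $s$ be the number of misseated absent-minded passengers, so $|D|=m-s$; the misseated passengers occupy exactly their own collective set of home seats, so $\pi$ restricts to a derangement of the misseated set $A_1\sqcup D$, where $A_1\subseteq\{1,\dots,k\}$ has size $s$. The attainability condition becomes the requirement that whenever $\pi$ sends one element of $D$ to another it increases. Because the probability above depends on $\pi$ only through $D$, I would group outcomes by $s$ and by $D$: summing $\prod_{i\in D}\frac{1}{n-i+1}$ over all $(m-s)$-subsets $D\subseteq\{k+1,\dots,n\}$ is exactly the Lemma and yields $\frac{1}{(n-k)!}\stirling{n-k+1}{m-s+1}$. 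Combined with the prefactor $\frac{(n-k)!}{n!}$, the $\binom{k}{s}$ choices of $A_1$, and the number $\tilde N(s,m-s)$ of admissible derangements (which depends only on the two sizes, hence factors out of the $D$-sum), this already produces the outer shape $\frac{1}{n!}\sum_s\binom{k}{s}\tilde N(s,m-s)\stirling{n-k+1}{m-s+1}$.

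It remains to show $\tilde N(s,q)=\sum_t (t!)^2\sterling{q}{t}\sum_r\binom{s}{r}L(r,t)d_{s-r}$ with $q=m-s$, and this is where the combinatorics lives. The increasing condition forces the $D$-elements to split into maximal increasing runs, and since each block of a set partition has a unique increasing arrangement, choosing the partition of the $q$ elements into $t$ blocks accounts for $\sterling{q}{t}$; as a cycle lying entirely in $D$ would be strictly increasing and hence impossible, every nontrivial cycle alternates these $t$ chains with nonempty runs of $A_1$-elements. I would then split $A_1$ into the $r$ elements sharing a cycle with a chain and the $s-r$ that do not: the latter form an unconstrained derangement ($d_{s-r}$, after $\binom{s}{r}$ choices), while the former fill the $t$ gaps between consecutive chains. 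Counting the interacting part as a permutation of the $t$ chains that fixes their cyclic order ($t!$ ways) together with a distribution of the $r$ chosen $A_1$-elements into the $t$ resulting arcs as nonempty ordered lists ($t!\,L(r,t)$ ways) gives the factor $(t!)^2L(r,t)$, completing the identity.

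The main obstacle is this last step: establishing that the chain-and-run decomposition is a genuine bijection. I must verify that runs are nonempty (forcing $r\ge t$), that maximality makes the chain partition well defined, that every nontrivial cycle meets $A_1$, and---most delicately---that the pair consisting of the cyclic order on chains together with the labeled nonempty ordered runs reconstructs each admissible derangement exactly once, so that the interacting structures number precisely $(t!)^2L(r,t)$. Verifying the probability formula and the attainability characterization of the first paragraph is routine, and the reduction through the Lemma is mechanical; by contrast the alternating-cycle count demands care to avoid over- or under-counting.
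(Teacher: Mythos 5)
Your proposal is correct and follows essentially the same route as the paper: the same per-outcome probability $\frac{(n-k)!}{n!}\prod_{i\in D}\frac{1}{n-(i-1)}$ for an attainable arrangement, the same appeal to the Lemma to sum over the positions of the misseated main-cabin passengers, and the same $(s,t,r)$-indexed count with factors ${k\choose s}{s\choose r}L(r,t)(t!)^2\sterling{m-s}{t}d_{s-r}$. What you describe as chains of $D$-elements linked by a permutation ($t!$ ways) with nonempty ordered $A_1$-lists in the gaps ($t!\,L(r,t)$ ways) is precisely the paper's decomposition into \emph{threads}, so your cycle-structure formulation is a more explicitly permutation-theoretic phrasing of the identical count.
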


Here, $\sterling{i}{j}$ is the Stirling number of the second kind, which counts the number of ways to partition a set of $i$ labeled objects into $j$ nonempty unlabeled subsets \cite[page 258]{Concrete}; $L(i,j)$ is the Lah number, which counts the number of ways a set of $i$ elements can be partitioned into $j$ nonempty linearly-ordered subsets \cite{Lah55,PePi07}; and $d_i$ is the number of derangements of a set of $i$ elements, that is, the number of permutations with no fixed points \cite[page 194]{Concrete}. Following \cite[pages 262]{Concrete}, we adopt the following conventions for positive integers $p$ and $q$:
\[
\sterling{-p}{q}=0,\sterling{-p}{0}=0,\text{ }\sterling{0}{q}=0,\text{ }\sterling{0}{0}=1, \stirling{p}{0}=0\text{ and }\stirling{p}{-q}=0.
\]

\begin{proof}
Since the absent-minded passengers are those with the lowest numbers, we associate them with the first-class cabin and the non-absent-minded passengers with the main cabin. The probability that exactly $m$ passengers are misseated is the sum over $s$ of the probabilities that a total of exactly $m$ passengers, including $s$ from first class and $m-s$ from the main cabin, are misseated.

The probability of a specific arrangement of the $k$ first-class passengers is
\[
  \frac{1}{n}\cdot \frac{1}{n-1}\cdots \frac{1}{n-(k-1)} = \frac{(n-k)!}{n!},
\]
and the probability of a specific sequence
\(
  i_1 < i_2 < \cdots < i_{m-s}
\)
of missseated main cabin passengers is given by
\[
\prod_{j=1}^{m-s}
  \frac{1}{n-(i_j-1)},
\]
since when it is time for passenger $i_{j}$ to be seated, there are $n-\left (i_{j}-1\right )$ seats available.

The total probability of the outcome is thus
\[
  \frac{(n-k)!}{n!} \cdot\prod_{j=1}^{m-s}
  \frac{1}{n-(i_j-1)}.
\]
We now count the number of outcomes with exactly $m$ misseated passengers including exactly $s$ first-class passengers and the particular passengers $i_{1}<i_{2}<\cdots i_{m-s}$ from the main cabin. There are \({k \choose s}\) ways of choosing which first-class passengers are misseated. 

The misseating of main cabin passengers  \(i_1, i_2, \ldots, i_{m-s}\)
occurs in \emph{threads}, with a thread consisting of a non-empty sequence of first-class passengers followed by a non-empty sequence of main cabin passengers. The number of threads is at least zero (in the case that no main-cabin passengers are misseated) and at most \(s\).
For a given number $t$ of threads,  at least \(t\) and at most \(s\) of the misseated first-class passengers are elements of these threads. Let the number of these absent-minded passengers be \(r\). There are then 
\(s-r\) misseated first-class passengers who are not part of a thread. 

\begin{figure}
\centering
\includegraphics[scale=1.05]{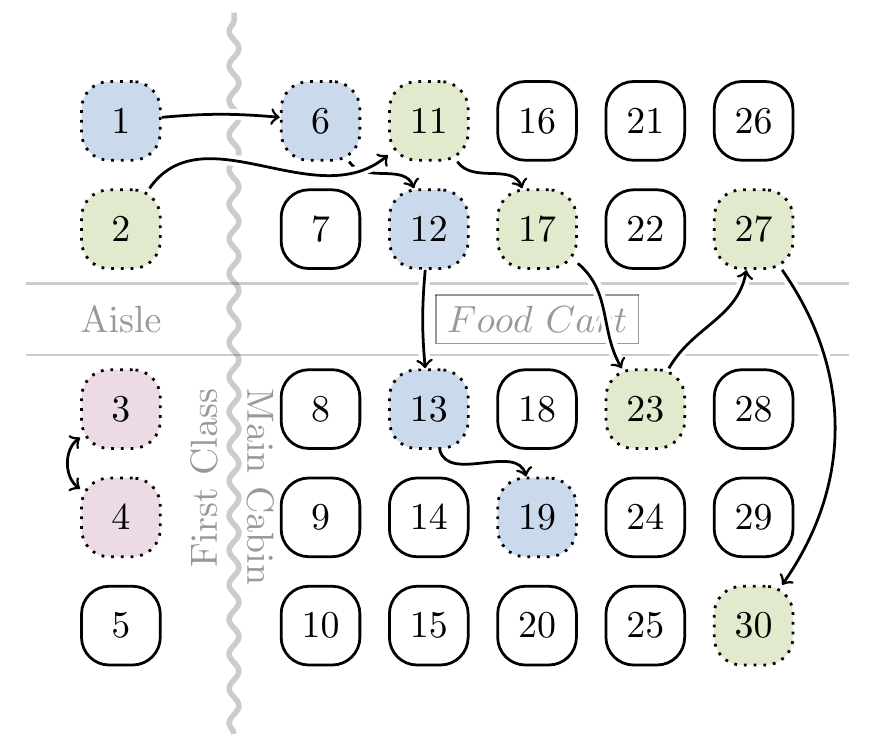}
\caption{Airplane passengers are misseated in threads. Here, \(n=30\), \(k=5\) and \(m=13\). Furthermore, \(s=4\) and \(r=2\).
The threads terminate when passenger \(19\) sits in either seat \(1\) or \(2\). Passenger \(30\) must then sit in whichever of these two seats remain.}\label{fig:threads}
\end{figure}

There are \({s \choose r}\) choices for the \(r\) first-class passengers who are in threads. These \(r\) passengers can be placed into \(t\) threads in \(L(r,t)\) ways. 
The \(i_1, \ldots, i_{m-s}\) passengers can be placed into these \(t\) threads in
\((t!)\sterling{m-s}{t}\)
ways.

Each thread ends with a main cabin passenger sitting in the seat of a first-class passenger who is seated first in a thread. This can hapen in \(t!\) ways. The remaining \(s-r\) misseated passengers
permute their seats, with none fixed. This can happen in $d_{s-r}$ ways. A visualization of this can be seen in Figure~\ref{fig:threads}. Thus,

\begin{align*}
  \mathbb{P}&(m \text{ misseated, including the main-cabin passengers
  } i_1, i_2, \ldots, i_{m-s} ) \notag \\
  &=\left(\sum_{t=0}^{s} \sum_{r=t}^{s} {k\choose s}{s\choose r}
    L(r,t) (t!)^2 \sterling{m-s}{t}d_{s-r}\right)\notag 
  \frac{(n-k)!}{n!}
  \prod_{j=1}^{m-s} \frac{1}{n-(i_j-1)},
\end{align*}
and
\begin{align*}
  \mathbb{P}&(m \text{ misseated, including } s \text{
    first-class passengers}) \\
   & = \left(\sum_{t=0}^{s} \sum_{r=t}^{s}
    {k \choose s}{s\choose r} L(r,t) (t!)^2
    \sterling{m-s}{t}d_{s-r}\right) 
  \cdot \frac{(n-k)!}{n!}  \sum_{k<i_1<i_2<\cdots<i_{m-n}} \left(
    \prod_{j=1}^{m-s} \frac{1}{n-(i_{j}-1)}\right) \\
   &= \frac{1}{n!}\left(\sum_{t=0}^{s} \sum_{r=t}^{s}
    {k \choose s}{s\choose r} L(r,t) (t!)^2
    \sterling{m-s}{t}d_{s-r}\right)  \stirling{n-k+1}{m-s+1}.
\end{align*}

Summing over $s$ gives
\begin{align*}
  {P}_{n,k}(m) 
                    &= \frac{1}{n!}\sum_{s=0}^{k}{k \choose
                      s}\stirling{n-k+1}{m-s+1}
\sum_{t=0}^{s }(t!)^2 \sterling{m-s}{t} \sum_{r=t}^{s} {s\choose r} L(r,t) 
                      d_{s-r},
\end{align*}
as claimed.
\end{proof}

\section{Proof of main result}

We proceed to obtain Theorem~\ref{thm:mainresult} from Theorem~\ref{thm:unsimplified}. To do so, we begin with the sum over $r$ using formulas for the Lah numbers \cite{PePi07} and the derangements \cite[page 195]{Concrete}. For $t \geq 1$, we have

\begin{align}\label{eqn:resultabove}
\sum_{r=t}^{s} {s\choose r} L(r,t) d_{s-r} 
& = \sum_{r=t}^{s} {s\choose r} {r-1\choose t-1}\frac{r!}{t!}
(s-r)!\sum_{j=0}^{s-r}\frac{(-1)^{j}}{j!}\nonumber \\
& =  \frac{s!}{t!}\sum_{j=0}^{s-t}\frac{(-1)^{j}}{j!}
\sum_{r=t}^{s-j}{r-1\choose t-1}\nonumber \\
& =  \frac{s!}{t!}\sum_{j=0}^{s-t}\frac{(-1)^{j}}{j!}{s-j\choose t}.
\end{align}
We note that if $t=0$, then $\sum_{r=t}^{s} {s\choose r} L(r,t) d_{s-r} $ and $ \frac{s!}{t!}\sum_{j=0}^{s-t}\frac{(-1)^{j}}{j!}{s-j\choose t}$ both equal $d_s$, so we can use the result of the above calculation in that case as well.

The following result is simple but useful. We record it as a lemma.

\begin{lemma}
For positive integers $J$, $K$, $L$, with $L\leq K$,
\[
\sum_{J=L}^{K}(-1)^{J}{K-L\choose J-L}
=(-1)^{L}\delta_{L,K},
\]
where $\delta_{L,K}$ is \(1\) if $L=K$ and \(0\) otherwise.
\end{lemma}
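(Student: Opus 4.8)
The plan is to reduce the alternating sum to a single application of the binomial theorem by re-centering the index of summation at zero. First I would substitute $i = J - L$, so that $J = i + L$ and the index $i$ ranges over $0 \le i \le K - L$ as $J$ ranges over $L \le J \le K$. This rewrites the left-hand side as
\[
\sum_{J=L}^{K}(-1)^{J}{K-L\choose J-L} = \sum_{i=0}^{K-L}(-1)^{i+L}{K-L\choose i} = (-1)^{L}\sum_{i=0}^{K-L}(-1)^{i}{K-L\choose i}.
\]

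Next I would recognize the inner alternating binomial sum as the expansion of $(1-1)^{K-L}$ by the binomial theorem, so that it equals $0^{K-L}$. Since $L \le K$ guarantees $K - L \ge 0$, this quantity is $1$ when $K - L = 0$ and $0$ otherwise; that is, it equals $\delta_{L,K}$. Multiplying by the extracted factor $(-1)^{L}$ then yields $(-1)^{L}\delta_{L,K}$, as claimed.

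The computation is entirely routine, so there is no genuine obstacle; the only point requiring care is the boundary case $L = K$, where the sum collapses to the single term $i = 0$ and one must read $0^{0} = 1$ in order for the binomial theorem to deliver $\delta_{L,K} = 1$ correctly. Everywhere else $K - L \ge 1$ forces the vanishing $(1-1)^{K-L} = 0$, matching $\delta_{L,K} = 0$.
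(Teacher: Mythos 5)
Your proof is correct and follows essentially the same route as the paper: the change of variables $i = J - L$, factoring out $(-1)^{L}$, and recognizing the remaining sum as the binomial expansion of $(1-1)^{K-L}$. Your explicit attention to the boundary case $L = K$ is a nice touch of care that the paper leaves implicit, but the argument is the same.
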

\begin{proof}
Make the change of variables $I=J-L$ to get
\[
(-1)^{L}\sum_{I=0}^{K-L}(-1)^{I}{K-L\choose I};
\]
the sum is the expansion of $(1-1)^{K-L}$, which is 0 unless $L=K$.
\end{proof}

We now consider the sum over $t$ in the equation of Theorem \ref{thm:unsimplified}, substituting the result obtained in equation~(\ref{eqn:resultabove}) above. 
For $s < m$, a formula for the Stirling numbers of the second kind \cite[page 265]{Concrete} gives

\begin{align*}
\sum_{t=0}^{s }(t!)^2 \sterling{m-s}{t}
\sum_{r=t}^{s} {s\choose r} L(r,t) d_{s-r} 
& = (s!)\sum_{t=0}^{s }\sum_{\ell=0}^{t}(-1)^{t-\ell}{t\choose \ell}\ell^{m-s}\sum_{j=0}^{s-t}\frac{(-1)^{j}}{j!}{s-j\choose t} \\
& = (s!)\sum_{\ell=0}^{s}(-1)^{\ell}\ell^{m-s}\sum_{j=0}^{s-\ell}\frac{(-1)^{j}}{j!}\sum_{t=\ell}^{s-j}(-1)^{t}{t\choose \ell}{s-j\choose t}. \end{align*}
Using trinomial revision \cite[page 174]{Concrete} gives
\[
{t\choose \ell}{s-j\choose t}={s-j\choose \ell}{s-j-\ell\choose s-j-t}
={s-j\choose \ell}{s-j-\ell\choose t-\ell},
\]
so that the above becomes
\begin{align*}
& = (s!)\sum_{\ell=0}^{s}(-1)^{\ell}\ell^{m-s}\sum_{j=0}^{s-\ell}\frac{(-1)^{j}}{j!}{s-j\choose \ell}\sum_{t=\ell}^{s-j}(-1)^{t}{s-j-\ell\choose t-\ell} \\
& = (s!)\sum_{\ell=0}^{s}\ell^{m-s}\sum_{j=0}^{s-\ell}\frac{(-1)^{j}}{j!}{s-j\choose \ell}\delta_{s-j-\ell,0} \\
& = (s!)(-1)^{s}\sum_{\ell=0}^{s}(-1)^{\ell}\frac{\ell^{m-s}}{(s-\ell)!}.
\end{align*}

We now address the case $s=m$. We have
\begin{equation*}
\sum_{t=0}^{s }(t!)^2 \sterling{m-s}{t}  \frac{s!}{t!}\sum_{j=0}^{s-t}\frac{(-1)^{j}}{j!}{s-j\choose t} = \sum_{t=0}^{m}(t!)^2 \sterling{0}{t}  \frac{m!}{t!}\sum_{j=0}^{m-t}\frac{(-1)^{j}}{j!}{m-j\choose t}.
\end{equation*}
The above is
\begin{align*}
\sum_{t=0}^{m}&(t!)\delta_{t,0} m!\sum_{j=0}^{m-t}\frac{(-1)^{j}}{j!}{m-j\choose t}  
 = m!\sum_{j=0}^{m}\frac{(-1)^{j}}{j!} 
  = m!(-1)^{m}\sum_{\ell=0}^{m}\frac{(-1)^{\ell}\ell^{0}}{(m-\ell)!}. 
\end{align*}

Substituting in the original equation now gives
\[
 {P}_{n,k}(m) = \frac{1}{n!}\sum_{s=0}^k \stirling{n-k+1}{m-s+1} {k \choose s} (s!)(-1)^s 
 \left[ \frac{\delta_{s,m}}{m!} + \sum_{\ell=1}^s \frac{ (-1)^\ell\ell^{m-s}}{(s-\ell)!}\right].
\]
Interpreting ${k\choose m}$ as 0 when $k<m$, and noting that $\stirling{n-k+1}{1}=(n-k)!$, we can rewrite this last result as
\begin{equation*}
{P}_{n,k}(m) = \frac{(-1)^m(n-k)!}{n!} {k \choose m} 
+\frac{1}{n!}\sum_{s=0}^k \stirling{n-k+1}{m-s+1} {k \choose s} s! 
  \sum_{\ell=1}^s  \frac{(-1)^{s-\ell}\ell^{m-s}}{(s-\ell)!},
\end{equation*}
as required. This proves Theorem~\ref{thm:mainresult}.

For a visual interpretation of this function for several \(k\) when the number of
passengers, \(n\), is \(100\), we direct the reader
to Figure~\ref{fig:histograms}.

\begin{figure}[t]
\centering
\includegraphics{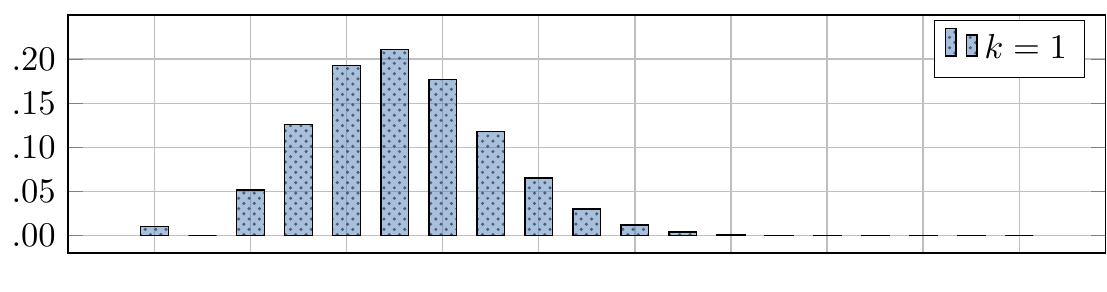}
\includegraphics{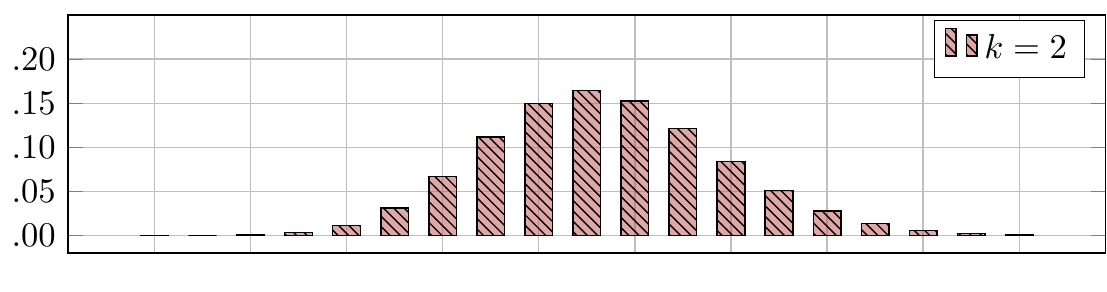}
\includegraphics{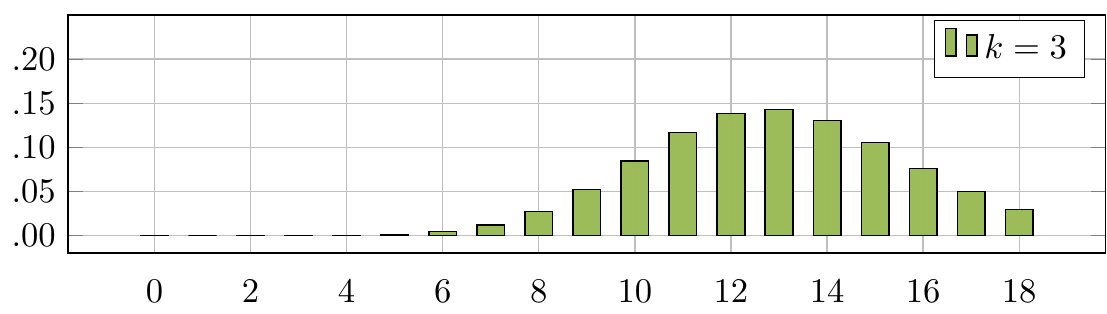}
\caption{A graph of the probability as a function of \(m\) given by
  our formula \(P_{n,k}(m)\), for an \(n=100\) passenger plane, with
  \(k=1, 2\) and \(3\) absent-minded passengers.} \label{fig:histograms}
\centering
\end{figure}
\pagebreak


\bibliography{references}
\bibliographystyle{plain}

\end{document}